\newcommand{\field}[1]{\mathbb{#1}}
\newcommand{\CC}{\field{C}}
\newcommand{\MM}{\field{M}}
\newcommand{\TT}{\field{T}}
\newcommand{\ZZ}{\field{Z}}
\newcommand{\Gg}{\mathcal{G}}
\newcommand{\Hh}{\mathcal{H}}
\newcommand{\Ll}{\mathcal{L}}
\newcommand{\Mm}{\mathcal{M}}
\newcommand{\Pp}{\mathcal{P}}
\newcommand{\Xx}{\mathcal{X}}
\newcommand{\la}{\langle}
\newcommand{\ra}{\rangle}
\newcommand{\ds}{\displaystyle}
\newtheorem{thm}{Theorem}[section]
\newtheorem{prop}[thm]{Proposition}
\theoremstyle{definition}
\newtheorem{dfn}[thm]{Definition}
\theoremstyle{remark}
\newtheorem{rmk}[thm]{Remark}
\newtheorem{example}[thm]{Example}
\newtheorem*{examples*}{Examples}
\numberwithin{equation}{subsection}
\numberwithin{equation}{subsection}
\title{ The Koopman representation  for self-similar groupoid actions}
\author{Valentin Deaconu}
\address{Valentin Deaconu \\ Department of Mathematics (0084)\\ University
of Nevada\\ Reno NV 89557-0084\\ USA} \email{vdeaconu@unr.edu}
\keywords{Koopman representation; self-similar groupoid action; residually finite dimensional; trace; Cuntz-Pimsner algebra.}
\subjclass{Primary 46L05.}
\begin{document}
\begin{abstract}
We introduce the $C^*$-algebra $C^*(\kappa)$ generated by the Koopman representation $\kappa$ of an \' etale groupoid $G$ acting on a measure space $(X,\mu)$. We prove that for a level transitive self-similar action $(G,E)$ with $E$ finite and $|uE^1|$ constant, there is an invariant measure $\nu$ on $X=E^\infty$ and  that $C^*(\kappa)$  is residually finite-dimensional with a normalized self-similar trace.
We also discus $p$-fold similarities of Hilbert spaces in connection to representations of the graph algebra $C^*(E)$ and self-similar representations of $G$ in connection to the Cuntz-Pimsner algebra $C^*(G,E)$.
 
\end{abstract}
\maketitle
\section{introduction}

\bigskip
The concept of a group action on a space was generalized to a groupoid action and it has applications to dynamical systems, representation theory and operator algebras. If groups can roughly be described  as the set of symmetries of certain objects, then groupoids can be thought as the set of symmetries of fibered objects.

Self-similar group actions are defined by using subgroups of the automorphism group of a rooted tree which is  viewed as the path space of a graph with one vertex and $n\ge 2$ edges. Given a finite directed graph $E$ with no sources, the corresponding path space is fibered over $E^0$ and it gives rise to a family of trees (or forest) $T_E$.  It is natural to consider self-similar actions of subgroupoids $G$ of PIso$(T_E)$, the discrete groupoid of partial isomorphsims of $T_E$. The most interesting case is when $|uE^1|$ is constant for $u\in E^0$, since then all trees in $T_E$ are isomorphic. 

We begin  by recalling the definition of a groupoid action on a space $X$ and of the Koopman representation $\kappa$ associated to an invariant measure $\mu$ on $X$. Self-similar groupoid actions $(G,E)$ were introduced in \cite{LRRW} and in the presence of an invariant measure on $X=E^\infty$, we discuss the $C^*$-algebra $C^*(\kappa)$ associated to the induced Koopman representation. We prove that $C^*(\kappa)$ is residually finite-dimensional (RFD) and it has a self-similar trace $\tau_0$.

We then define the wreath recursion for $(G,E)$ and the matrix recursion for $C_c(G)$. We recall the concept of $p$-fold similarities of Hilbert spaces and relate it to representations of the graph algebra $C^*(E)$. We also  define self-similar representations of $G$ and relate these  to representations of the Cuntz-Pimsner algebra $C^*(G,E)$. We illustrate with several examples.

\bigskip

\section{Groupoid actions and the Koopman representation}

\bigskip

A groupoid $G$ is a  small category with inverses. We will use $d$ and $t$ for the domain and target maps $d,t:G \to G^{(0)}$ to distinguish them from the source and range maps $s,r:E^1\to E^0$ on directed graphs. An \'etale groupoid is a topological groupoid where the target map $t$ (and necessarily the
domain map  $d$) is a local homeomorphism. The unit space $G^{(0)}$ of an \'etale groupoid is always an open subset of $G$. The set of composable pairs is denoted $G^{(2)}$ and the set of $g\in G$ with $d(g)=u, t(g)=v$ is denoted $G_u^v$. Two units $x,y \in G^{(0)}$ belong to the same $G$-orbit if there exists $g \in G$ such that $d(g) = x$ and $t(g) = y$. When every $G$-orbit is dense in $G^{(0)}$, then the groupoid $G$ is called minimal. 

The isotropy group of a unit $x\in G^{(0)}$ is the group \[G_x^x :=\{g\in G\; | \; d(g)=t(g)=x\},\] and the isotropy bundle is
\[G' :=\{g\in G\; | \; d(g)=t(g)\}= \bigcup_{x\in G^{(0)}} G_x^x.\]
A groupoid $G$ is said to be principal if all isotropy groups are trivial, or equivalently, $G' = G^{(0)}$. 

\begin{dfn} Let $G$ be an \' etale groupoid. A bisection is an open subset $U\subseteq  G$ such that $d$ and $t$ are both injective when restricted to $U$.
\end{dfn}

 We now recall the definition of a groupoid action on a space given in \cite{MRW1}:

\begin{dfn}\label{sa}  A topological groupoid $G$ is said to act
(on the left) on a locally compact space $X$, if there are given 
a continuous open surjection $\omega : X \rightarrow G^{(0)}$,  called the anchor or moment map,
and a continuous map
\[G\ast X \rightarrow X, \quad\text{write}\quad (g , x)\mapsto
g \cdot x,\]
where
\[G \ast X = \{(g , x)\in G \times X \mid d(g) = \omega (x)\},\]
that satisfy

\medskip

i) $\omega (g \cdot x) =t (g)$ for all $(g , x) \in G \ast X,$

\medskip

ii) $(g _2, x) \in G \ast X,\,\, (g_1, g_2)
\in G ^{(2)}$ implies $(g _1g _2, x),
(g _1, g _2\cdot x) \in G * X$ and
\[g _1\cdot(g _2\cdot x) = (g _1g _2)
\cdot x,\]

\medskip

iii) $\omega (x)\cdot x = x$ for all $x\in X$. 
\end{dfn}
We should mention that recently, some authors don't necessarily assume that $\omega : X\to G^{(0)}$ is   open or surjective (see  \cite{W} for example).

The action of $G$ on $X$  is called transitive if given $x,y\in X$, there is $g\in G$ with $g\cdot x=y$ and is free if $g\cdot x=x$ for some $x$ implies $g=\omega(x)$. 

The set of fixed points is defined as \[X^G=\{x\in X: g\cdot x=x\;\text{for all}\; g\in G_{\omega(x)}^{\omega(x)}\}.\] 
If $G$ has trivial isotropy, then $X^G=X$.

For $x\in X$, its  stabilizer group is
\[G(x)=\{g\in G: g\cdot x=x\},\] which is a subgroup of  $G_u^u$ where $u=\omega(x)$. The orbit of $x\in X$ is \[G\cdot x=\{g\cdot x: g\in G, \; d(g)=\omega(x)\}.\] The set of orbits is denoted by $G\backslash X$ and has the quotient topology. The action of $G$ on $X$ is called minimal if every orbit $G\cdot x$ is dense in $X$.

\begin{example} A groupoid $G$ with open domain and target maps acts on its unit space $G^{(0)}$ by $g\cdot d(g)=t(g)$. In this case, $\omega=id$. The groupoid is called transitive if this action is transitive. Notice that $g\cdot u=u$ for all $g\in G_u^u$, in particular $(G^{(0)})^G=G^{(0)}$.  A transitive groupoid with discrete unit space is of the form $G^{(0)}\times K\times G^{(0)}$ where $K$ is the isotropy group. 
\end{example}

\begin{rmk}The fibered product $G \ast X = \{(g , x)\in G \times X \mid d(g) = \omega (x)\}$ has a natural
structure of  groupoid, called the semi-direct product or
action groupoid and is denoted  by $G \ltimes X$, where
\[(G \ltimes X)^{(2)} = \{ ((g_1, x_1),(g _2, 
x_2)) \mid \,\,x_1 = g _2\cdot x_2\},\]
with operations
\[(g _1, g_2\cdot x_2)(g _2,  x_2) = 
(g _1g _2, x_2), \;\;
(g,x)^{-1} = (g ^{-1}, g \cdot x).\]
The domain and target maps of $G \ltimes X$ are
 \[d(g ,x) = (d(g),x)=
(\omega( x), x),\quad t(g ,x) =  (t(g), g\cdot x)=(\omega(g\cdot  x),g\cdot x),\]
and the unit space $(G\ltimes X)^{(0)}$ may be identified with $X$ via the map \[i:X\to G\ltimes X,\;\; i(x)= (\omega(x),x).\] The projection map\[\pi:G\ltimes X\to G, \;\; \pi(g,x)=g\] is a covering of groupoids. 
\end{rmk}

Assume that the topological groupoid $G$ acts on the space $X$ via $\omega:X\to G^{(0)}$. A Borel measure $\mu$ on $X$ is $G$-invariant if $\mu(g^{-1}\cdot A)=\mu(A)$ for any Borel set $A\subseteq \omega^{-1}(t(g))$. 

\begin{dfn} For a $G$-invariant measure $\mu$ on $X$, the Hilbert space $L^2(X,\mu)$ is fibered over $G^{(0)}$ with fibers $L^2(\omega^{-1}(u),\mu)$ for $u\in G^{(0)}$ and we  define the Koopman representation $\kappa$ of $G$ on $L^2(X,\mu)$ by
\[\kappa_g:L^2(\omega^{-1}(d(g)),\mu)\to L^2(\omega^{-1}(t(g)),\mu),\;\; \kappa_g(f)(x)=f(g^{-1}\cdot x).\]
\end{dfn}
It can be verified that each $\kappa_g$ is a partial isometry in $\Ll(L^2(X,\mu))$ and  if $G$ is \'etale (our main case of interest), the representation $\kappa:G\to\Ll(L^2(X,\mu))$  can be extended to the  algebra $C_c(G)$ of compactly supported continuous functions.  We denote by $C^*(\kappa)$  the closure of $\kappa(C_c(G))$ in the operator norm of $\Ll(L^2(X,\mu))$.

For $G$ a discrete groupoid acting on  $X$, we can also define a family of representations $\{\lambda_x\}_{ x\in X}$ on the orbit Hilbert spaces $\ell^2(G\cdot x)$ such that \[\lambda_x(g)\xi(y)=\xi(g^{-1}\cdot y)\] for $\xi\in \ell^2(G\cdot x)$. Note that $y=h\cdot x$ where $d(h)=\omega(x)$ and $g^{-1}\cdot y=(g^{-1}h)\cdot x\in G\cdot x$ since $d(g^{-1})=\omega(h\cdot x)=t(h)$. 

If $\mu$ is a measure on $X$, by taking the integral $\ds\Hh=\int_X\ell^2(G\cdot x)d\mu(x)$, we get a representation $\lambda:G\to \Ll(\Hh)$ which can be extended to a representation of $C_c(G)$. The relationship between the norm closure $C^*(\lambda)$ of $\lambda(C_c(G))$ in $\Ll(\Hh)$, the above $C^*$-algebra $C^*(\kappa)$ obtained from the Koopman representation and the reduced $C^*$-algebra $C^*_r(G)$ remains to be explored. Recall that for an \'etale groupoid, $C^*_r(G)$ is defined as the completion of $C_c(G)$ in the norm $\|a\|=\sup_{u\in G^{(0)}}\|\pi_\lambda^u(a)\|$ where
\[\pi_\lambda^u(a)\xi(g)=\sum_{r(h)=r(g)}a(h)\xi(h^{-1}g),\]
for $a\in C_c(G)$ and $\xi\in \ell^2(s^{-1}(u))$.

There are   other interesting $C^*$-algebras associated to a groupoid action, like $C^*(G\ltimes X)\cong C_0(X)\rtimes G$. In the case of a self-similar groupoid action $(G,E)$,   the Cuntz-Pimsner algebra $C^*(G,E)$ is constructed from a $C^*(G)$-correspondence or from an \'etale groupoid $\Gg(G,E)$, see \cite{D} for details.
\bigskip

\section{Self-similar actions}

\bigskip

Let $E = (E^0, E^1, r, s)$ be a finite directed graph  with no sources. For $k \ge 2$, define  the set of paths of length $k$ in $E$ as
\[E^k = \{e_1e_2\cdots e_k : e_i \in E^1,\; r(e_{i+1}) = s(e_i)\}.\]
The maps $r,s$ are naturally extended to $E^k$ by taking
\[r(e_1e_2\cdots e_k)=r(e_1),\;\; s(e_1e_2\cdots e_k)=s(e_k).\]
We denote by $E^* :=\bigcup_{ k\ge 0} E^k$  the space of finite paths (including vertices) and  by $E^\infty$ the infinite path space of $E$ with the usual topology given by the cylinder sets $Z(\alpha)=\{\alpha\xi:\xi\in E^\infty\}$ for $\alpha\in E^*$. 
Both spaces $E^*$ and $E^\infty$ are fibered over $E^0$ using the range map.
We know that $E^\infty$ is a Cantor space precisely when $E$ satisfies condition (L), i.e. every cycle has an entrance.
 
 We can visualize the set  $E^*$ as indexing the vertices of a union of rooted trees  or forest $T_E$   given by $T_E^0 =E^*$ and with edges \[T_E^1 =\{(\alpha, \alpha e) :\alpha\in E^*, e\in E^1\; \text{and}\; s(\alpha)=r(e)\}.\]
 
 \begin{example}\label{ex}
 
 For the graph
 \[\begin{tikzpicture}[shorten >=0.4pt,>=stealth, semithick]
\renewcommand{\ss}{\scriptstyle}
\node[inner sep=1.0pt, circle, fill=black]  (u) at (-2,0) {};
\node[below] at (u.south)  {$\ss u$};
\node[inner sep=1.0pt, circle, fill=black]  (v) at (0,0) {};
\node[below] at (v.south)  {$\ss v$};
\node[inner sep=1.0pt, circle, fill=black]  (w) at (2,0) {};
\node[below] at (w.south)  {$\ss w$};

\draw[->, blue] (u) to [out=45, in=135]  (v);
\node at (-1,0.7){$\ss e_2$};
\draw[->, blue] (v) to [out=-135, in=-45]  (u);
\node at (-1,-0.7) {$\ss e_3$};
\draw[->, blue] (v) to [out=45, in=135]  (w);
\node at (1,0.7){$\ss e_4$};
\draw[->, blue] (v) to (w);
\node at (1,0.1){$\ss e_5$};
\draw[->, blue] (w) to [out=-135, in=-45]  (v);
\node at (1,-0.7) {$\ss e_6$};

\draw[->, blue] (u) .. controls (-3.5,1.5) and (-3.5, -1.5) .. (u);
\node at (-3.35,0) {$\ss e_1$};

\end{tikzpicture}
\]

the forest $T_E$ looks like

\[
\begin{tikzpicture}[shorten >=0.4pt,>=stealth, semithick]
\renewcommand{\ss}{\scriptstyle}
\node[inner sep=1.0pt, circle, fill=black]  (u) at (-4,4) {};
\node[above] at (u.north)  {$\ss u$};
\node[inner sep=1.0pt, circle, fill=black]  (v) at (0,4) {};
\node[above] at (v.north)  {$\ss v$};
\node[inner sep=1.0pt, circle, fill=black]  (w) at (4,4) {};
\node[above] at (w.north)  {$\ss w$};

\node[inner sep=1.0pt, circle, fill=black] (e1) at (-5,3){};
\node[left] at (-5,3)  {$\ss e_1$};
\node[inner sep=1.0pt, circle, fill=black] (e3) at (-3,3){};
\node[right] at (-3,3)  {$\ss e_3$};
\draw[->, blue] (e1) to  (u);
\draw[-> , blue] (e3) to  (u);
\node[inner sep=1.0pt, circle, fill=black] (e11) at (-5.4,2){};
\node[inner sep=1.0pt, circle, fill=black] (e13) at (-4.6,2){};
\node[below] at (-5.4,2)  {$\ss e_1e_1$};
\node[below] at (-5.4,2) {$\vdots$};
\draw[-> , blue] (e11) to   (e1);
\node[below] at (-4.6,2)  {$\ss e_1e_3$};
\node[below] at (-4.6,2) {$\vdots$};
\draw[-> , blue] (e13) to  (e1);
\node[inner sep=1.0pt, circle, fill=black] (e32) at (-3.4,2){};
\node[inner sep=1.0pt, circle, fill=black] (e36) at (-2.6,2){};
\node[below] at (-3.4,2) {$\ss e_3e_2$};
\node[below] at (-3.4,2) {$\vdots$};
\node[below] at (-2.6,2) {$\ss e_3e_6$};
\node[below] at (-2.6,2) {$\vdots$};
\draw[-> , blue] (e32) to   (e3);
\draw[-> , blue] (e36) to   (e3);

\node[inner sep=1.0pt, circle, fill=black] (e2) at (-1,3){};
\node[left] at (-1,3)  {$\ss e_2$};

\node[inner sep=1.0pt, circle, fill=black] (e6) at (1,3){};
\node[right] at (1,3)  {$\ss e_6$};
\draw[-> , blue] (e2) to  (v);
\draw[-> , blue] (e6) to  (v);
\node[inner sep=1.0pt, circle, fill=black] (e21) at (-1.4,2){};
\node[inner sep=1.0pt, circle, fill=black] (e23) at (-0.6,2){};

\draw[-> , blue] (e21) to   (e2);

\draw[-> , blue] (e23) to  (e2);
\node[inner sep=1.0pt, circle, fill=black] (e64) at (0.6,2){};
\node[inner sep=1.0pt, circle, fill=black] (e65) at (1.4,2){};
\draw[-> , blue] (e64) to   (e6);
\draw[-> , blue] (e65) to   (e6);
\node[below] at (-1.4,2)  {$\ss e_2e_1$};
\node[below] at (-1.4,2) {$\vdots$};
\node[below] at (-0.6,2)  {$\ss e_2e_3$};
\node[below] at (-0.6,2) {$\vdots$};
\node[below] at (0.6,2) {$\ss e_6e_4$};
\node[below] at (0.6,2) {$\vdots$};
\node[below] at (1.4,2) {$\ss e_6e_5$};
\node[below] at (1.4,2) {$\vdots$};

\node[inner sep=1.0pt, circle, fill=black] (e4) at (3,3){};
\node[left] at (3,3)  {$\ss e_4$};

\node[inner sep=1.0pt, circle, fill=black] (e5) at (5,3){};
\node[right] at (5,3)  {$\ss e_5$};
\draw[-> , blue] (e4) to  (w);
\draw[-> , blue] (e5) to  (w);
\node[inner sep=1.0pt, circle, fill=black] (e42) at (2.6,2){};
\node[inner sep=1.0pt, circle, fill=black] (e46) at (3.4,2){};

\draw[-> , blue] (e42) to   (e4);

\draw[-> , blue] (e46) to  (e4);
\node[inner sep=1.0pt, circle, fill=black] (e52) at (4.6,2){};
\node[inner sep=1.0pt, circle, fill=black] (e56) at (5.4,2){};
\draw[-> , blue] (e52) to   (e5);
\draw[-> , blue] (e56) to   (e5);
\node[below] at (2.6,2)  {$\ss e_4e_2$};
\node[below] at (2.6,2) {$\vdots$};
\node[below] at (3.4,2)  {$\ss e_4e_6$};
\node[below] at (3.4,2) {$\vdots$};
\node[below] at (4.6,2) {$\ss e_5e_2$};
\node[below] at (4.6,2) {$\vdots$};
\node[below] at (5.4,2) {$\ss e_5e_6$};
\node[below] at (5.4,2) {$\vdots$};

\end{tikzpicture}
\]
Note that the level $n$ of $T_E$ has $|E^n|$ vertices.

 \end{example}

Recall that a partial isomorphism of the forest $T_E$ corresponding to a given directed graph $E$  consists of a pair $(v, w) \in E^0 \times E^0$ and a bijection $g : vE^* \to wE^*$ such that
\begin{itemize}

\item $g|_{vE^k} : vE^k \to wE^k$ is bijective for all $k\ge 1$.

\item $g(\xi e)\in g(\xi)E^1$ for $\xi\in vE^*$  and $e\in E^1$ with $r(e)=s(\xi)$.
 
 \end{itemize}
 Here $vE^k$ denotes the set of paths $\xi\in E^k$ with $r(\xi)=v$ and similarly for $vE^*$.
The set of partial isomorphisms of $T_E$ with the usual operations forms a discrete groupoid PIso$(T_E)$ with unit space $E^0$. The identity morphisms are  $id_v : vE^* \to vE^*$, the inverse of $g : vE^* \to wE^*$ is $g^{-1} : wE^* \to vE^*$, and the  multiplication is composition. We often identify $v\in E^0$ with $id_v\in$ PIso$(T_E)$. 

In Example \ref{ex}, the sum of the entries in each row of the graph adjacency matrix 
\[ A_E=\left(\begin{array}{ccc}1&1&0\\1&0&1\\0&2&0\end{array}\right)\] 
is the same, so the set of edges ending at each vertex has the same cardinality and the corresponding trees are isomorphic. In general it could happen that for some vertices $v,w$ there is no bijection $g : vE^* \to wE^*$ as above. In particular, PIso$(T_E)$ could be  a group bundle, see Example \ref{gb}. If $E$ has a single vertex, then PIso$(T_E)=$Aut$(T_E)$ is a discrete group.

\begin{example}\label{gb}
Let $E$ be the graph
\vspace{-20mm} 
\[
\begin{tikzpicture}[shorten >=0.4pt,>=stealth, semithick]
\renewcommand{\ss}{\scriptstyle}
\node[inner sep=1.0pt, circle, fill=black]  (u) at (-1.5,0) {};
\node[left] at (u.west)  {$\ss u$};
\node[inner sep=1.0pt, circle, fill=black]  (v) at (1.5,0) {};
\node[right] at (v.east)  {$\ss v$};
\draw[->, blue] (u) to (v);
\node at (0,0.2){$\ss e_3$};
\draw[->, blue] (u) .. controls (-4.5,3.5) and (-4.5, -3.5) .. (u);
\node at (-3.9,0) {$\ss e_1$};
\draw[->, blue] (u) .. controls (-3.5,1.5) and (-3.5, -1.5) .. (u);
\node at (-3.2,0) {$\ss e_2$};
\draw[->, blue] (v) .. controls (3.5,1.5) and (3.5, -1.5) .. (v);
\node at (3.2,0) {$\ss e_4$};
\draw[->, blue] (v) .. controls (4.5,3.5) and (4.5, -3.5) .. (v);
\node at (4,0) {$\ss e_5$};
\end{tikzpicture}
\vspace{-20mm}
\]
with $E^0=\{u,v\}, E^1=\{e_1, e_2, e_3, e_4, e_5\}$ and forest $T_E$
\[
\begin{tikzpicture}[shorten >=0.4pt,>=stealth, semithick]
\renewcommand{\ss}{\scriptstyle}
\node[inner sep=1.0pt, circle, fill=black]  (u) at (-3,4) {};
\node[above] at (u.north)  {$\ss u$};
\node[inner sep=1.0pt, circle, fill=black]  (v) at (3,4) {};
\node[above] at (v.north)  {$\ss v$};

\node[inner sep=1.0pt, circle, fill=black] (e1) at (-4,3){};
\node[left] at (-4,3)  {$\ss e_1$};
\node[inner sep=1.0pt, circle, fill=black] (e2) at (-2,3){};
\node[right] at (-2,3)  {$\ss e_2$};
\draw[->, blue] (e1) to  (u);
\draw[-> , blue] (e2) to  (u);
\node[inner sep=1.0pt, circle, fill=black] (e11) at (-4.5,2){};
\node[inner sep=1.0pt, circle, fill=black] (e12) at (-3.5,2){};
\node[below] at (-4.5,2)  {$\ss e_1e_1$};
\node[below] at (-4.5,2) {$\vdots$};
\draw[-> , blue] (e11) to   (e1);
\node[below] at (-3.5,2)  {$\ss e_1e_2$};
\node[below] at (-3.5,2) {$\vdots$};
\draw[-> , blue] (e11) to  (e1);
\draw[-> , blue] (e12) to  (e1);
\node[inner sep=1.0pt, circle, fill=black] (e21) at (-2.5,2){};
\node[inner sep=1.0pt, circle, fill=black] (e22) at (-1.5,2){};
\node[below] at (-1.5,2) {$\ss e_2e_2$};
\node[below] at (-1.5,2) {$\vdots$};
\node[below] at (-2.5,2) {$\ss e_2e_1$};
\node[below] at (-2.5,2) {$\vdots$};
\draw[-> , blue] (e21) to  (e2);
\draw[-> , blue] (e22) to  (e2);

\node[inner sep=1.0pt, circle, fill=black] (e3) at (1,3){};
\node[left] at (1,3)  {$\ss e_3$};

\node[inner sep=1.0pt, circle, fill=black] (e4) at (3,3){};
\node[right] at (3,3)  {$\ss e_4$};
\node[inner sep=1.0pt, circle, fill=black] (e5) at (5,3){};
\node[right] at (5,3)  {$\ss e_5$};
\draw[-> , blue] (e3) to  (v);
\draw[-> , blue] (e4) to  (v);
\draw[-> , blue] (e5) to  (v);
\node[inner sep=1.0pt, circle, fill=black] (e31) at (0.5,2){};
\node[below] at (0.5,2)  {$\ss e_3e_1$};
\node[below] at (0.5,2) {$\vdots$};
\node[inner sep=1.0pt, circle, fill=black] (e32) at (1.5,2){};
\node[below] at (1.5,2)  {$\ss e_3e_2$};
\node[below] at (1.5,2) {$\vdots$};
\draw[-> , blue] (e31) to   (e3);

\draw[-> , blue] (e32) to  (e3);
\node[inner sep=1.0pt, circle, fill=black] (e43) at (2.3,2){};
\node[below] at (2.3,2)  {$\ss e_4e_3$};
\node[below] at (2.3,2) {$\vdots$};
\node[inner sep=1.0pt, circle, fill=black] (e44) at (3,2){};
\node[below] at (3,2)  {$\ss e_4e_4$};
\node[below] at (3,2) {$\vdots$};
\node[inner sep=1.0pt, circle, fill=black] (e45) at (3.7,2){};
\node[below] at (3.7,2)  {$\ss e_4e_5$};
\node[below] at (3.7,2) {$\vdots$};
\draw[-> , blue] (e43) to   (e4);
\draw[-> , blue] (e44) to   (e4);
\draw[-> , blue] (e45) to   (e4);

\node[inner sep=1.0pt, circle, fill=black] (e53) at (4.3,2){};
\node[below] at (4.3,2)  {$\ss e_5e_3$};
\node[below] at (4.3,2) {$\vdots$};
\node[inner sep=1.0pt, circle, fill=black] (e54) at (5,2){};
\node[below] at (5,2)  {$\ss e_5e_4$};
\node[below] at (5,2) {$\vdots$};
\node[inner sep=1.0pt, circle, fill=black] (e55) at (5.7,2){};
\node[below] at (5.7,2)  {$\ss e_5e_5$};
\node[below] at (5.7,2) {$\vdots$};
\draw[-> , blue] (e53) to   (e5);
\draw[-> , blue] (e54) to   (e5);
\draw[-> , blue] (e55) to   (e5);

\end{tikzpicture}
\]
Obviously, the trees $uE^*$ and $vE^*$ are not isomorphic.
\end{example}

We are interested in self-similar actions of groupoids on the path space of directed graphs $E$ as introduced and studied in \cite{LRRW}.

\begin{dfn}\label{ss} Let $E$ be a finite directed graph with no sources, and let $G$ be a discrete groupoid with unit space $G^{(0)}=E^0$. A {\em self-similar action} $(G,E)$ of  $G$  on the path space of $E$ is given by a faithful homomorphism $G\to$ PIso$(T_E)$ such that for every $g\in G$ and $e\in d(g)E^1$ there exists a unique $h\in G$ denoted also by $g|_e$ and called the restriction of $g$ to $e$ such that
\[g\cdot(e\xi)=(g\cdot e)(h\cdot \xi)\;\;\text{for all}\;\; \xi\in s(e)E^*.\]

\end{dfn}
\begin{rmk}
We have \[d(g|_e)=s(e),\; t(g|_e)=s(g\cdot e)=g|_e\cdot s(e),\;  r(g\cdot e)=g\cdot r(e).\] This can be visualized as

\[\begin{tikzpicture}[shorten >=0.4pt,>=stealth, thick]
\node[inner sep=1.0pt, circle, fill=black]  (u) at (-1,-1) {};
\node[inner sep=1.0pt, circle, fill=black]  (v) at (1,-1) {};
\node[inner sep=1.0pt, circle, fill=black]  (w) at (-1,1) {};
\node[inner sep=1.0pt, circle, fill=black]  (x) at (1,1) {};
\draw[-> , blue] (v) to   (u);
\draw[-> , red] (u) to   (w);
\draw[-> , red] (v) to   (x);
\draw[-> , blue] (x) to   (w);
\node[below] at (0,-1)  {$e$};
\node[above] at (0,1)  {$g\cdot e$};
\node[left] at (-1,-1)  {$r(e)=d(g)$};
\node[left] at (-1,1)  {$g\cdot r(e)$};
\node[left] at (-1,0)  {$g$};
\node[right] at (1,-1)  {$s(e)$};
\node[right] at (1,1)  {$s(g\cdot e)$};
\node[right] at (1,0)  {$g|_e$};
\end{tikzpicture}
\]

In particular, in general $s(g\cdot e)\neq g\cdot s(e)$, i.e. the source map is not $G$-equivariant as it is assumed for group actions in \cite{EP}. It is shown in Appendix A of \cite{LRRW} that a self-similar group action $(H,E)$ as in \cite{EP} determines a self-similar groupoid action $(H\times E^0, E)$ as in Definition \ref{ss}, where $H\times E^0$ is the action groupoid of the group $H$. But not any self-similar groupoid action comes from a self-similar group action.

It is possible that $g|_e=g$ for all $e\in d(g)E^1$, in which case \[g\cdot(e_1e_2\cdots e_n)=(g\cdot e_1)\cdots(g\cdot e_n).\]
\end{rmk}

 \begin{rmk}
 A self-similar groupoid action $(G,E)$    determines an action of $G$ on the graph $T_E$, in the sense that $G$ acts on both the vertex space $T_E^0$ and the edge space $T_E^1$ which are fibered over $G^{(0)}=E^0$ and intertwines the range and the source maps of $T_E$, see Definition 4.1 in \cite{De}.
  A self-similar groupoid action  extends to an action of $G$ on $E^\infty$ such that $g\cdot \xi=\eta$ if for all $n$ we have $g\cdot(\xi_1\cdots\xi_n)=\eta_1\cdots \eta_n$.
\end{rmk}

The faithfulness condition ensures that for each $g \in G$ and $\xi\in E^*$ with $d(g) = r(\xi)$, there is a unique element $g|_\xi\in G$ satisfying 
\[g\cdot(\xi\eta)=(g\cdot\xi)(g|_\xi\cdot \eta)\;\text{ for all}\; \eta\in s(\xi)E^*.\] 
By Proposition 3.6 of \cite{LRRW}, self-similar groupoid actions have the following properties: 

(1) $g|_{\xi\eta} = (g|_\xi)|_\eta$

(2) $id_{r(\xi)}|_{\xi} = id_{s(\xi)}$ 

(3) if $(h, g)\in G^{(2)}$, then $(h|_{g\cdot\xi},g|_\xi)\in G^{(2)}$ 
and $(hg)|_\xi = (h|_{g\cdot\xi})(g|_\xi)$ 

(4) $g^{-1}|_\xi=(g|_{g^{-1}\cdot\xi})^{-1}$

\noindent for $g, h \in G, \xi\in d(g)E^*$, and $\eta\in s(\xi)E^*$.
\begin{dfn}\label{trans}
A self-similar action $(G,E)$ is said to be level transitive if the induced action on $E^*$ is transitive on each $E^n$. It follows that the action is level transitive iff it is minimal on the infinite path space $E^\infty$. 

\end{dfn}
\begin{example}\label{forest} Let $E$ be the graph from Example \ref{ex}
\[\begin{tikzpicture}[shorten >=0.4pt,>=stealth, semithick]
\renewcommand{\ss}{\scriptstyle}
\node[inner sep=1.0pt, circle, fill=black]  (u) at (-2,0) {};
\node[below] at (u.south)  {$\ss u$};
\node[inner sep=1.0pt, circle, fill=black]  (v) at (0,0) {};
\node[below] at (v.south)  {$\ss v$};
\node[inner sep=1.0pt, circle, fill=black]  (w) at (2,0) {};
\node[below] at (w.south)  {$\ss w$};

\draw[->, blue] (u) to [out=45, in=135]  (v);
\node at (-1,0.7){$\ss e_2$};
\draw[->, blue] (v) to [out=-135, in=-45]  (u);
\node at (-1,-0.7) {$\ss e_3$};
\draw[->, blue] (v) to [out=45, in=135]  (w);
\node at (1,0.7){$\ss e_4$};
\draw[->, blue] (v) to (w);
\node at (1,0.1){$\ss e_5$};
\draw[->, blue] (w) to [out=-135, in=-45]  (v);
\node at (1,-0.7) {$\ss e_6$};

\draw[->, blue] (u) .. controls (-3.5,1.5) and (-3.5, -1.5) .. (u);
\node at (-3.35,0) {$\ss e_1$};

\end{tikzpicture}
\]
with $E^0=\{u,v,w\}, E^1=\{e_1, e_2, e_3, e_4, e_5, e_6\}$. Consider the groupoid $G$ with unit space $G^{(0)}=\{u,v,w\}$ and generators $a,b,c$ such that $d(a)=u, \; t(a)=v=d(b)=t(c), \; t(b)=w=d(c)$ as in the picture

\[\begin{tikzpicture}[shorten >=0.4pt,>=stealth, semithick]
\renewcommand{\ss}{\scriptstyle}
\node[inner sep=1.0pt, circle, fill=black]  (u) at (-2,0) {};
\node[below] at (u.south)  {$\ss u$};
\node[inner sep=1.0pt, circle, fill=black]  (v) at (0,0) {};
\node[below] at (v.south)  {$\ss v$};
\node[inner sep=1.0pt, circle, fill=black]  (w) at (2,0) {};
\node[below] at (w.south)  {$\ss w$};
\draw[->, red] (u) to (v);
 \node at (-1,0.25) {$\ss a$}; 

\draw[->, red] (v) to [out=45, in=135]  (w);
\node at (1,0.7){$\ss b$};
\draw[->, red] (w) to [out=-135, in=-45]  (v);
\node at (1,-0.7) {$\ss c$};

\end{tikzpicture}
\]
Define the self-similar action $(G,E)$ given by
 \[a\cdot e_1=e_2,\;\; a|_{e_1}=u,\;\; a\cdot e_3=e_6,\;\; a|_{e_3}=b,\]
\[b\cdot e_2=e_5,\;\; b|_{e_2}=a, \;\; b\cdot e_6=e_4, \;\; b|_{e_6}=c,\] 
\[c\cdot e_4=e_2,\;\; c|_{e_4}=a^{-1},\;\; c\cdot e_5=e_6,\;\; c|_{e_5}=b.\]

Note that the action of $G$ is level transitive. It can be shown (see 
\cite{D}) that $G$ is a transitive groupoid with isotropy $\ZZ$ and therefore $C^*(G)\cong C_r^*(G)\cong M_3 \otimes C(\TT)$.

\end{example}

\begin{example} Let $E$ be the graph
\vspace{-20mm} 
\[
\begin{tikzpicture}[shorten >=0.4pt,>=stealth, semithick]
\renewcommand{\ss}{\scriptstyle}
\node[inner sep=1.0pt, circle, fill=black]  (u) at (-1.5,0) {};
\node[left] at (u.west)  {$\ss u$};
\node[inner sep=1.0pt, circle, fill=black]  (v) at (1.5,0) {};
\node[right] at (v.east)  {$\ss v$};
\draw[->, blue] (u) to (v);
\node at (0,0.2){$\ss e_3$};
\draw[->, blue] (u) .. controls (-4.5,3.5) and (-4.5, -3.5) .. (u);
\node at (-3.9,0) {$\ss e_1$};
\draw[->, blue] (u) .. controls (-3.5,1.5) and (-3.5, -1.5) .. (u);
\node at (-3.2,0) {$\ss e_2$};
\draw[->, blue] (v) .. controls (3.5,1.5) and (3.5, -1.5) .. (v);
\node at (3.2,0) {$\ss e_4$};
\end{tikzpicture}
\vspace{-20mm}
\]
with $E^0=\{u,v\}, E^1=\{e_1, e_2, e_3, e_4\}$ and forest $T_E$
\[
\begin{tikzpicture}[shorten >=0.4pt,>=stealth, semithick]
\renewcommand{\ss}{\scriptstyle}
\node[inner sep=1.0pt, circle, fill=black]  (u) at (-3,4) {};
\node[above] at (u.north)  {$\ss u$};
\node[inner sep=1.0pt, circle, fill=black]  (v) at (3,4) {};
\node[above] at (v.north)  {$\ss v$};

\node[inner sep=1.0pt, circle, fill=black] (e1) at (-4,3){};
\node[left] at (-4,3)  {$\ss e_1$};
\node[inner sep=1.0pt, circle, fill=black] (e2) at (-2,3){};
\node[right] at (-2,3)  {$\ss e_2$};
\draw[->, blue] (e1) to  (u);
\draw[-> , blue] (e2) to  (u);
\node[inner sep=1.0pt, circle, fill=black] (e11) at (-4.5,2){};
\node[inner sep=1.0pt, circle, fill=black] (e12) at (-3.5,2){};
\node[below] at (-4.5,2)  {$\ss e_1e_1$};
\node[below] at (-4.5,2) {$\vdots$};
\draw[-> , blue] (e11) to   (e1);
\node[below] at (-3.5,2)  {$\ss e_1e_2$};
\node[below] at (-3.5,2) {$\vdots$};
\draw[-> , blue] (e11) to  (e1);
\draw[-> , blue] (e12) to  (e1);
\node[inner sep=1.0pt, circle, fill=black] (e21) at (-2.5,2){};
\node[inner sep=1.0pt, circle, fill=black] (e22) at (-1.5,2){};
\node[below] at (-1.5,2) {$\ss e_2e_2$};
\node[below] at (-1.5,2) {$\vdots$};
\node[below] at (-2.5,2) {$\ss e_2e_1$};
\node[below] at (-2.5,2) {$\vdots$};
\draw[-> , blue] (e21) to  (e2);
\draw[-> , blue] (e22) to  (e2);

\node[inner sep=1.0pt, circle, fill=black] (e3) at (2,3){};
\node[left] at (2,3)  {$\ss e_3$};

\node[inner sep=1.0pt, circle, fill=black] (e4) at (4,3){};
\node[right] at (4,3)  {$\ss e_4$};
\draw[-> , blue] (e3) to  (v);
\draw[-> , blue] (e4) to  (v);
\node[inner sep=1.0pt, circle, fill=black] (e31) at (1.5,2){};
\node[below] at (1.5,2)  {$\ss e_3e_1$};
\node[below] at (1.5,2) {$\vdots$};
\node[inner sep=1.0pt, circle, fill=black] (e32) at (2.5,2){};
\node[below] at (2.5,2)  {$\ss e_3e_2$};
\node[below] at (2.5,2) {$\vdots$};
\draw[-> , blue] (e31) to   (e3);

\draw[-> , blue] (e32) to  (e3);
\node[inner sep=1.0pt, circle, fill=black] (e43) at (3.5,2){};
\node[below] at (3.5,2)  {$\ss e_4e_3$};
\node[below] at (3.5,2) {$\vdots$};
\node[inner sep=1.0pt, circle, fill=black] (e44) at (4.5,2){};
\node[below] at (4.5,2)  {$\ss e_4e_4$};
\node[below] at (4.5,2) {$\vdots$};
\draw[-> , blue] (e43) to   (e4);
\draw[-> , blue] (e44) to   (e4);
\end{tikzpicture}
\]

Consider the groupoid $G=\la a,b,c\ra$ with $d(a)=d(b)=d(c)=u= t(a)=t(b)$ and $ t(c)=v$ as in the picture
\vspace{-10mm}
\[\begin{tikzpicture}[shorten >=0.4pt,>=stealth, semithick]
\renewcommand{\ss}{\scriptstyle}
\node[inner sep=1.0pt, circle, fill=black]  (u) at (-2,0) {};
\node[below] at (u.south)  {$\ss u$};
\node[inner sep=1.0pt, circle, fill=black]  (v) at (1,0) {};
\node[below] at (v.south)  {$\ss v$};

\draw[->, red] (u) to  (v);
\node at (-0.5,0.2){$\ss c$};
\draw[->, red]  (u) .. controls (-3.5,-1) and (-3.5, 1) .. (u);
\node at (-3.3,0) {$\ss a$};
\draw[->, red] (u) .. controls (-4.5,-2) and (-4.5, 2) .. (u);
\node at (-4.1,0){$\ss b$};

\end{tikzpicture}
\vspace{-10mm}\]
Define the self-similar action $(G,E)$ by

\[a\cdot e_1=e_2,\; a|_{e_1}=b, \; a\cdot e_2=e_1, \; a|_{e_2}=a,\]
\[b\cdot e_1=e_1, \; b|_{e_1}=b, \; b\cdot e_2=e_2, \; b|_{e_2}=a,\]
\[c\cdot e_1=e_3, \;  c|_{e_1}=a, \; c\cdot e_2=e_4, \; c|_{e_2}=c.\]

Then the action is level transitive and $G_u^u=\la a,b\ra$ is isomorphic to the lamplighter group $L=(\ZZ_2)^\ZZ\rtimes \ZZ$, where the action is translation.  It can be shown that the groupoid $G$ is transitive and hence $C^*(G)\cong M_2\otimes C^*(L)$.

\end{example}
\begin{example}
Let $E$ be the graph from Example \ref{gb} with $E^0=\{u,v\}$ and $E^1=\{e_1, e_2, e_3, e_4, e_5\}$. The groupoid $G=\la a,b\ra$ where $d(a)=t(a)=u, d(b)=t(b)=v$ acts on $E^*$ by
\[a\cdot e_1=e_2,\; a|_{e_1}=u,\; a\cdot e_2=e_1,\; a|_{e_2}=a,\]
\[b\cdot e_3=e_3,\; b|_{e_3}=a,\; b\cdot e_4=e_5, \; b|_{e_4}=v,\;  b\cdot e_5=e_4,\;  b|_{e_5}=b.\]
This action is not level transitive. In fact, $G$ is a group bundle with $G_u^u\cong G_v^v\cong \mathbb Z$, so $C^*(G)\cong C(\TT)\oplus C(\TT)$.

\end{example}

\bigskip

\section{The $C^*$-algebra of the  Koopman representation}

\bigskip

In this section, we study the $C^*$-algebra $C^*(\kappa)$ associated to the Koopman representation of a level transitive self-similar action $(G,E)$ for an invariant probability measure $\nu$ on $E^\infty$. 
We assume that $|uE^1|=p\ge 2$ is constant for all $u\in E^0$, such that all trees $uE^*$  are isomorphic. In particular,  $E^\infty$ is the union of Cantor sets $uE^\infty$ for $u\in E^0$ and it has  a uniform probability measure $\nu$. 
The measure $\nu$ is obtained as the direct product of uniform  measures on each $uE^1$ such that $\ds \nu(uE^\infty)=\frac{1}{|E^0|}$. Since the action is level transitive, the measure $\nu$ is $G$-invariant.

Recall that a  trace on a  $C^*$-algebra $A$ is a linear functional $\tau:A\to \mathbb C$ such that $\tau(x^*x)\ge 0$ and $\tau(xy)=\tau(yx)$ for all $x,y\in A$. A trace is faithful if $\tau(x^*x)=0$ implies $x=0$ and is normalized  if $A$ is unital and $\tau(1)=1$.

Given an  \'etale groupoid $G$ with compact unit space, a regular Borel measure $\mu$ on $G^{(0)}$ is said to be $G$-invariant if for any bisection $U$ we have $\mu(t(U))=\mu(d(U))$. Such a regular, $G$-invariant Borel probability measure $\mu$ on $G^{(0)}$ determines a trace on $C_r^*(G)$ and on $C^*(G)$ via the formula
\[ \tau(f)=\int_{G^{(0)}}f(u)d\mu(u)\] for $f\in C_c(G)$.

For a discrete groupoid $G$, there is a  representation $\iota$ of $G$ on $\ell^2(G^{(0)})$ in which $\iota_g$ is the matrix unit $e_{t(g),d(g)}$. This is the analogue of the trivial representation of a group. The corresponding homomorphism $\pi:C^*(G)\to \Ll(\ell^2(G^{(0)}))$ takes elements of $C_c(G)$ to finite rank operators. When $G^{(0)}$ is finite, $\pi$ takes values in $M_{|G^{(0)}|}(\CC)$ and composing with the normalized trace, we obtain a tracial state on $C^*(G)$. If $G$ is not transitive, then  $\pi$ is not onto, and the normalized traces on simple summands will give other tracial states on $C^*(G)$.

Traces on $C^*(G)$ can also be  obtained from traces on the isotropy groups, see section 7 in \cite{LRRW}.

\begin{dfn}
A $C^*$-algebra $A$ is called residually finite-dimensional (RFD) if  there exist $*$-homomorphisms $\pi_n:A\to \MM_{k(n)}(\CC)$ into matrix algebras which determine a faithful map $\ds  A\to \prod_{n\ge 1} \MM_{k(n)}(\CC)$. In other words, $A$ is RFD if it has a separating  family of finite dimensional representations.
\end{dfn}
Given an embedding $\ds A\to \prod_{k\ge 1}\MM_k(\CC)$ of an RFD $C^*$-algebra $A$, if  $\tau_k$ is the standard normalized trace on $\MM_k(\CC)$, then the trace \[\tau:\prod_{k\ge 1}\MM_k(\CC)\to \CC,\;\; \tau(a_1,a_2,...)=\sum_{k=1}^\infty2^{-n}\tau_k(a_k)\]
is faithful and its restriction to $A$ yields a faithful trace. The coefficients $2^{-n}$ can be changed to possibly obtain other traces on $A$.

\begin{thm}
Let $(G,E)$ be a level transitive self-similar action such that $|uE^1|=p\ge 2$ is constant for all $u\in E^0$, and let $\nu$ be the uniform probability measure on $X=E^\infty$. Then the $C^*$-algebra $C^*(\kappa)$ of the Koopman representation of $G$ on $L^2(X,\nu)$ is RFD and it has a normalized trace $\tau_0$.
\end{thm}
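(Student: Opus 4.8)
The plan is to exploit the filtration of $L^2(X,\nu)$ by the finite-dimensional subspaces coming from the cylinder sets at each level, on which the Koopman operators act by partial permutation matrices. For $n\ge 0$ let $V_n\subseteq L^2(X,\nu)$ be the span of the normalized indicators $e_\alpha=\nu(Z(\alpha))^{-1/2}\chi_{Z(\alpha)}$, $\alpha\in E^n$. Since the cylinders $Z(\alpha)$, $\alpha\in E^n$, partition $X$ into sets of positive measure, $\{e_\alpha\}$ is an orthonormal basis and $\dim V_n=|E^n|=|E^0|\,p^n$. First I would record the identity $\kappa_g\,\chi_{Z(\alpha)}=\chi_{Z(g\cdot\alpha)}$ when $r(\alpha)=d(g)$ and $0$ otherwise, which follows from $g\cdot Z(\alpha)=Z(g\cdot\alpha)$ together with the $G$-invariance and uniformity of $\nu$; in the orthonormal basis this reads $\kappa_g\,e_\alpha=e_{g\cdot\alpha}$ for $r(\alpha)=d(g)$ and $\kappa_g\,e_\alpha=0$ otherwise.

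In particular each $V_n$ is invariant under every $\kappa_g$, and since $\kappa_g^{\,*}=\kappa_{g^{-1}}$ with $g^{-1}$ again acting on cylinders, $V_n$ \emph{reduces} the representation. Writing $P_n$ for the orthogonal projection onto $V_n$, the compressions $\pi_n(a)=\kappa(a)|_{V_n}$ then define unital $*$-homomorphisms $\pi_n:C^*(\kappa)\to\Ll(V_n)\cong\MM_{|E^0|p^n}(\CC)$, with $\pi_n(\kappa_g)=\sum_{r(\alpha)=d(g)}E_{g\cdot\alpha,\alpha}$ a partial permutation matrix. For RFD I would show this family is separating: the $V_n$ are increasing, as a function constant on level-$n$ cylinders is constant on level-$(n{+}1)$ cylinders, and $\bigcup_n V_n$ is dense in $L^2(X,\nu)$. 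Hence if $a\in C^*(\kappa)$ satisfies $\pi_n(a)=0$ for all $n$, then $a$ vanishes on a dense set and $a=0$, so $a\mapsto(\pi_n(a))_n$ is a faithful map into $\prod_n\MM_{|E^0|p^n}(\CC)$.

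For the trace, each $\tau_n:=\tau_{|E^n|}\circ\pi_n$ (the normalized matrix trace) is already a normalized tracial state, so a normalized trace exists immediately. To single out the self-similar one I would compute $\tau_n(\kappa_g)=|E^n|^{-1}\#\{\alpha\in E^n: r(\alpha)=d(g),\ g\cdot\alpha=\alpha\}$ and observe, using $g\cdot(e\beta)=(g\cdot e)(g|_e\cdot\beta)$, that a fixed path $e\beta$ forces $g\cdot e=e$ and $g|_e\cdot\beta=\beta$. Since $|E^{n+1}|=p\,|E^n|$ this gives the renewal identity
\[
\tau_{n+1}(\kappa_g)=\frac1p\sum_{e\in d(g)E^1,\ g\cdot e=e}\tau_n(\kappa_{g|_e}).
\]
Taking a Banach limit $\tau_0(a)=\operatorname{LIM}_n\tau_n(a)$, which is again a normalized trace as a limit of tracial states, and using the shift-invariance $\operatorname{LIM}_n\tau_{n+1}=\operatorname{LIM}_n\tau_n$, I obtain the self-similarity relation $\tau_0(\kappa_g)=\tfrac1p\sum_{e:\,g\cdot e=e}\tau_0(\kappa_{g|_e})$.

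I expect the crux to be the reduction step: verifying that each $V_n$ is invariant under the whole algebra \emph{and} under adjoints, so that the $\pi_n$ are genuine $*$-homomorphisms rather than merely completely positive compressions. This rests entirely on the formula $\kappa_g\,\chi_{Z(\alpha)}=\chi_{Z(g\cdot\alpha)}$ and on $\nu$ being $G$-invariant and uniform, so that the basis vectors are permuted isometrically; once this is in place, the separation argument for RFD and the Banach-limit construction of $\tau_0$ are formal.
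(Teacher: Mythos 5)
Your argument is correct and follows the same overall strategy as the paper: the level-$n$ cylinder subspaces are finite dimensional and reduce the Koopman representation, giving a separating family of finite-dimensional $*$-representations, and the trace is built from the normalized matrix traces at each level. Two points of divergence are worth recording. For the RFD part, the paper decomposes $L^2(X,\nu)$ as $\Hh_0\oplus\bigoplus_{n\ge 1}\Hh_n'$ with $\Hh_n'=\Hh_n\ominus\Hh_{n-1}$ and embeds $C^*(\kappa)$ into $\prod_{n}\MM_{q_n}$, whereas you compress onto the increasing subspaces $V_n=\Hh_n$ and separate by density of $\bigcup_n V_n$; these are interchangeable (the paper itself identifies $\kappa|_{\Hh_n}$ with the permutation representation on $\ell^2(E^n)$), and your version makes the separating property slightly more transparent, provided you note, as you do, that each $V_n$ is reducing so the compressions are genuine $*$-homomorphisms. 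For the trace, the paper proves that $\lim_n \tau_n(x)$ actually exists: for $g\in G$ the sequence $\frac{1}{|E^0|p^n}\mathrm{Tr}[g]_n$ is nonincreasing because each diagonal $p\times p$ block of $[g]_n$ sitting over a $1$ of $[g]_{n-1}$ is a partial permutation matrix of normalized trace at most $1$; one then extends by linearity and a norm estimate. Your Banach limit sidesteps this monotonicity argument and still yields a normalized tracial state, which is all the theorem asserts, and your renewal identity $\tau_{n+1}(\kappa_g)=\frac1p\sum_{g\cdot e=e}\tau_n(\kappa_{g|_e})$ makes the self-similarity of $\tau_0$ explicit, something the paper claims in the abstract but does not derive in the proof. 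The only thing you give up is the canonical limit trace $\tau_0(x)=\lim_n\frac{1}{|E^0|p^n}\mathrm{Tr}[x]_n$; if you want it, the same fixed-path count you use for the renewal identity, applied to the last edge of a path instead of the first, gives $\tau_{n+1}(\kappa_g)\le\tau_n(\kappa_g)$ and hence genuine convergence on $C_c(G)$ without any Banach limit.
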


\begin{proof}

The Hilbert space $\Hh=L^2(E^\infty,\nu)$ is generated by the characteristic functions $\chi_{Z(\alpha)}$  of the cylinder sets $Z(\alpha)=\{\alpha\xi:\xi\in E^\infty\}$ for $\alpha\in E^*$. Recall that the Koopman representation of $G$ is given by
\[\kappa_g:L^2(d(g)E^\infty,\nu)\to L^2(t(g)E^\infty,\nu),\; \kappa_g(f)(x)=f(g^{-1}\cdot x)\]
 and it follows that $\kappa_g(\chi_{Z(\alpha)})=\chi_{Z(g\cdot \alpha)}$.
 
 For $n\ge 0$ let $\Hh_n$ be the span of all $\chi_{Z(\alpha)}$ with $|\alpha|=n$. Then $\Hh_n$ is finite dimensional with dimension $|E^0|p^n$. We have an embedding $j_n:\Hh_n\to \Hh_{n+1}$ given by 
\[\chi_{Z(\alpha_1\cdots \alpha_n)}\mapsto \sum_{r(e)=s(\alpha_n)}\chi_{Z(\alpha_1\cdots\alpha_ne)}.\]
If $\Hh_n'$ denotes the orthogonal complement of $\Hh_{n-1}$ in $\Hh_n$ for $n\ge 1$, then $\dim \Hh_n'=|E^0|p^{n-1}(p-1)=q_n$ and \[ \Hh=L^2(E^\infty,\nu)=\Hh_0\oplus\bigoplus_{n\ge 1}\Hh_n'\] with $ \dim \Hh_0=|E^0|=q_0$. Since each of the partitions $\Pp_n$ of $E^\infty$ into cylinder sets $Z(\alpha)$ with $|\alpha|=n$ is $G$-invariant and the spaces $\Hh_n$ and $\Hh_n'$ are $G$-invariant, it follows that the Koopman representation $\kappa:G\to\Ll(L^2(X,\nu))$ is a direct sum of finite dimensional representations $\pi_n'$ and $C^*(\kappa)$ embedds into $\ds \prod_{n\ge 0}\MM_{q_n}$, so it is residually finite-dimensional. Note that $\pi_n=\kappa|_{\Hh_n}$ is isomorphic to the  representation $\sigma_n$ on $\ell^2(E^n)$ induced by the action of the groupoid on the $n$-th level $E^n$ of the forest $T_E$.

If we write $x\in C_c(G)$ as $x=(x_0,x_1,...)$ where $x_n\in \MM_{q_n}$ for $n\ge 0$ and \[ [x]_n=\left[\begin{array}{ccccc}x_0&0&\cdots &0\\0&x_1&\cdots&0\\\vdots&\vdots&\cdots&\vdots\\0&0&\cdots&x_n\end{array}\right],\] then we  define
\[\tau_0(x)=\lim_{n\to \infty}\frac{1}{|E^0|p^n}\text {Tr}[x]_n,\]
where Tr is the sum of the diagonal entries in a matrix. Obviously, $\tau_0(1)=1$.

To see that the limit exists, for $g\in G$ we denote by $[g]_n$ the matrix of order  $|E^0|p^n$ corresponding to the operator $\pi_n(g)$, in other words a partial isometry matrix with entries $0$ or $1$. If we represent $[g]_n$ as a block matrix of order $|E^0|p^{n-1}$ with diagonal blocks $g_{ii}$ of order $p$, since $[g]_n$ is obtained from the matrix $[g]_{n-1}$ by replacing the zero matrix elements with zero matrices of order $p$ and by replacing the entries equal to $1$ with the corresponding partial isometry matrix of order $p$ which has normalized trace $\le 1$, we get
\[\frac{1}{|E^0|p^n}\text{Tr}[g]_n=\frac{1}{|E^0|p^{n-1}}\sum_{i=1}^{|E^0|p^{n-1}}\frac{1}{p}\text{Tr}(g_{ii})\le \frac{1}{|E^0|p^{n-1}}\text{Tr}[g]_{n-1},\]
so the limit defining $\tau_0$ exists for $g\in G$ and for any $x\in C_c(G)$ as well.

The property $\tau_0(xy)=\tau_0(yx)$ follows from the fact that Tr$(AB)=$Tr$(BA)$. For $x\in C_c(G)$ self-adjoint we have $\|x\|_\kappa=\sup_{n\ge 0}\|x_n\|$ for $x=(x_0,x_1,...)$, where on the right hand side we have the operator norm of  $\MM_{q_n}$. We obtain
\[\tau_0(x)=\lim_{n\to \infty}\frac{1}{|E^0|p^n}\text{Tr}[x]_n=\]\[=\lim_{n\to \infty}\left[\frac{1}{p^n}\frac{\text{Tr}(x_0)}{q_0}+\frac{p-1}{p^n}\frac{\text{Tr}(x_1)}{q_1}+\frac{p-1}{p^{n-1}}\frac{\text{Tr}(x_2)}{q_2}+\cdots +\frac{p-1}{p}\frac{\text{Tr}(x_n)}{q_n}\right]\le\]\[\le \lim_{n\to \infty}\left[\left(1+\frac{1}{p}+\frac{1}{p^2}+\cdots\right)\sup_{1\le i\le n}\frac{\text{Tr}(x_i)}{q_i}\right]\le 2\sup_n\|x_n\|,\]
since Tr$(B)\le m\|B\|$ for a self-adjoint $m\times m$ matrix $B$.
We conclude that the linear functional $\tau_0$ can be extended to the whole algebra $C^*(\kappa)$, the norm closure of $C_c(G)$ in $\Ll(\Hh)$, since  any element in $C_c(G)$ is a linear combination of two self-adjoint elements.

\end{proof}

\begin{example}
Let's determine the trace $\tau_0$ on $C^*(\kappa)$, the $C^*$-algebra of the Koopman representation on $L^2(E^\infty, \nu)$ for the self-similar groupoid action $(G,E)$ in Example \ref{forest}.
Recall that  $|E^0|=3$ and $p=2$, so $\dim \Hh_n=3\cdot 2^n$. We get
\[\pi_0(a)=\left[\begin{array}{ccc}0&0&0\\1&0&0\\0&0&0\end{array}\right],\; \pi_0(b)=\left[\begin{array}{ccc}0&0&0\\0&0&0\\0&1&0\end{array}\right],\; \pi_0(c)=\left[\begin{array}{ccc}0&0&0\\0&0&1\\0&0&0\end{array}\right].\]
The $6\times 6$ matrix $\pi_1(a)$ is obtained by replacing the $21$ entry $1$ of $\pi_0(a)$ by the $2\times 2$ matrix $\ds\left[\begin{array}{cc}1&0\\0&1\end{array}\right]$ and the $0$ entries by $2\times 2$ zero matrices. The $6\times 6$ matrix  $\pi_1(b)$ replaces the $32$ entry $1$ of $\pi_0(b)$ by the $2\times 2$ matrix $\ds\left[\begin{array}{cc}0&1\\1&0\end{array}\right]$ and the $0$ entries by $2\times 2$ zero matrices. The $6\times 6$ matrix  $\pi_1(c)$ replaces the $23$ entry $1$ of $\pi_0(b)$ by the $2\times 2$ matrix $\ds\left[\begin{array}{cc}1&0\\0&1\end{array}\right]$ and the $0$ entries by $2\times 2$ zero matrices. Iterating this process, it is clear that since the diagonal entries of the $3\cdot 2^n\times 3\cdot 2^n$ matrices $\pi_n(a), \pi_n(b)$ and $\pi_n(c)$ are zero, we get
\[\tau_0(a)=\tau_0(b)=\tau_0(c)=0.\]
Note that $\ds \tau_0(u)=\tau_0(v)=\tau_0(w)=\frac{1}{3}$.
\end{example}

\bigskip

\section{Self-similar representations}

\bigskip

Consider a self-similar groupoid action $(G,E)$ such that $|uE^1|=p\ge 2$. Every $g\in G$ determines a   bijection $\sigma_g: d(g)E^1\to t(g)E^1$ given by the action $e\mapsto g\cdot e$ for $e\in d(g)E^1$,  and an element  in $G^{d(g)E^1}$  equal to the function $\varphi_g: e\mapsto g|_e$ from $d(g)E^1$ to $G$, identified with a $p$-tuple of elements in $G$. Let $\ds \mathfrak{S}(E^1)$ be the set of  bijections $uE^1\to vE^1$ for $u,v\in E^0$, which has a groupoid structure with unit space $E^0$. Note that $\mathfrak{S}(E^1)$ acts on $G^p$. The associated wreath recursion for $(G,E)$ is the groupoid homomorphism
\[\phi: G\to \mathfrak{S}(E^1)\ltimes  G^p,\;  \phi(g)=(\sigma_g, \varphi_g).\]
Indeed, we have  
\[(\sigma_g, \varphi_g)(\sigma_h,\varphi_h)=(\sigma_{gh}, \varphi_{gh})\]
for $(g,h)\in G^{(2)}$. Recall that $(gh)|_e=(g|_{h\cdot e})(h|_e)$. 
\begin{dfn}
A $p$-fold similarity of an infinite dimensional Hilbert space $\Hh$ is an isomorphism $\psi:\Hh\to \Hh^p=\underbrace{\Hh\oplus\cdots \oplus \Hh}_p$.
\end{dfn}
\begin{example}
For a level transitive self-similar groupoid action $(G,E)$ such that $|uE^1|=p\ge 2$, let $\nu$ be the invariant probability measure on $E^\infty$. The Hilbert spaces $L^2(uE^\infty, \nu)$ are decomposed into direct sums \[L^2(uE^\infty, \nu)= \bigoplus_{e\in uE^1}L^2(eE^\infty,\nu)\] and the spaces $L^2(eE^\infty,\nu)$ are naturaly isomorphic to $L^2(s(e)E^\infty,\nu)$ via
\[V_e:L^2(eE^\infty, \nu)\to L^2(s(e)E^\infty, \nu), \; V_e(f)(\xi)=\frac{1}{\sqrt{p}}f(e\xi).\]
We can view $V_e$ as partial isometries of $L^2(E^\infty,\nu)$ and then $\ds \sum_{e\in E^1}V_e$ is  a $p$-fold similarity of the Hilbert space $L^2(E^\infty, \nu)$.
\end{example}

\begin{rmk}
Recall that the graph $C^*$-algebra $C^*(E)$ is generated by partial isometries $S_e$ for $e\in E^1$ and projections $P_u$ for $u\in E^0$ such that
\[S_e^*S_e=P_{s(e)}, \;\sum_{r(e)=u}S_eS_e^*=P_u.\]
Given a representation $\pi:C^*(E)\to \Ll(\Hh)$ on a Hilbert space $\Hh$, since $\ds\sum_{u\in E^0}P_u=I$, we get a decomposition $\ds \Hh=\bigoplus_{u\in E^0}\Hh_u$ with $\Hh_u=\pi(P_u)\Hh$ and since $|uE^1|=p$ is constant, we get a $p$-fold similarity $\Hh_u\to \Hh_u^p$ given by $\xi\mapsto (\pi(S^*_{e_1})(\xi), \pi(S^*_{e_2})(\xi),...,\pi(S^*_{e_p})(\xi))$ for $uE^1=\{e_1,e_2,...,e_p\}$. Conversely, a decomposition $\ds \Hh=\bigoplus_{u\in E^0}\Hh_u$ and a $p$-fold similarity $\psi:\Hh_u\to \Hh_u^p$ for each $u\in E^0$ determines a representation of $C^*(E)$ by $\pi(S_{e_j})(\xi)=\psi^{-1}(0,...,\xi,...,0)$ with $\xi$ in position $j$ for $e_j\in uE^1$.
\end{rmk}
\begin{example}
The representation $\pi$ of $C^*(E)$ associated with the $p$-fold similarity of $\ds L^2(E^\infty,\nu)=\bigoplus _{u\in E^0}L^2(uE^\infty, \nu)$ is generated by the partial isometries $\pi(S_e)=T_e$ given by
\[T_e(f)(\xi)=\begin{cases}\sqrt{p}f(\xi')&\text{if}\;\xi=e\xi'\\0\;&\text{otherwise}\end{cases}\]
for $f\in L^2(E^\infty, \nu)$.
\end{example}
\begin{dfn}
Let $(G,E)$ be a self-similar groupoid action. If $\Hh$ is a Hilbert space bundle over $E^0$ such that each fiber $\Hh_u$ for $u\in E^0$ has a $p$-fold similarity and $\pi:C^*(E)\to \Ll(\Hh)$ is the associated representation of the graph algebra with generators $S_e$, then a representation $\rho$ of $G$ on $\Hh$ is called self-similar if $\rho(g)\pi(S_e)=\pi(S_{g\cdot e})\rho(g|_e)$.

\end{dfn}
\begin{example}
Let $E$ be a finite graph with $|uE^1|=p\ge 2$ constant for each $u\in E^0$.   If the groupoid $G$ acts on $\ds E^*=\bigcup_{u\in E^0}uE^*$ and the induced action on $E^\infty$ preserves the  measure $\nu$, then we get a representation $\rho$ of $G$ on $L^2(E^\infty, \nu)$ which is self-similar.
\end{example}

\begin{prop}
Let $(G,E)$ be a self-similar action as above, and let $\pi:C^*(E)\to \Ll(\Hh)$ be a representation associated to the similarity of a Hilbert bundle $\Hh$. A representation $\rho$ of $G$ on $\Hh$ is self-similar if and only if $\rho$ and $\pi$ generate a representation of the Cuntz-Pimsner algebra $C^*(G,E)=C^*(\Gg(G,E))$. Hence self-similar representations of $G$ are precisely restrictions onto $G$ of representations of $C^*(G,E)$.
\end{prop}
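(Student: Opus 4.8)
The plan is to invoke the universal property of the Cuntz--Pimsner algebra and match the defining relations of $C^*(G,E)$ against the pair $(\rho,\pi)$ one relation at a time. Recall from \cite{D} that $C^*(G,E)$ is the Cuntz--Pimsner algebra $\Oo_M$ of a correspondence $M$ over $A=C^*(G)$, built so that $M$ is generated as a right Hilbert $A$-module by elements $s_e$, $e\in E^1$, carrying the graph structure in its right action and inner product, and with a left action $\phi\colon A\to\Ll(M)$ determined by the wreath recursion, namely $\phi(u_g)s_e=s_{g\cdot e}\,u_{g|_e}$ on generators, where $\{u_g\}$ denote the canonical generators of $C^*(G)$. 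Because $E$ is finite with no sources and $|uE^1|=p$ is constant, the left action lands in the compacts and Katsura's ideal is all of $A$; hence the Cuntz--Pimsner covariance condition reduces to $S_e^*S_e=P_{s(e)}$ together with $\sum_{r(e)=u}S_eS_e^*=P_u$, which are exactly the graph relations already satisfied by $\pi$.

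First I would fix the form of a representation of $\Oo_M$ on $\Hh$: it is a Cuntz--Pimsner covariant Toeplitz pair $(\pi_A,\psi)$, where $\pi_A\colon A\to\Ll(\Hh)$ is a $*$-homomorphism and $\psi\colon M\to\Ll(\Hh)$ is linear satisfying $\psi(\xi)^*\psi(\eta)=\pi_A(\langle\xi,\eta\rangle)$, $\psi(\phi(a)\xi)=\pi_A(a)\psi(\xi)$, and $\psi(\xi\cdot a)=\psi(\xi)\pi_A(a)$. Setting $S_e:=\psi(s_e)$ and $\rho(g):=\pi_A(u_g)$ identifies any such pair with a representation $\pi$ of $C^*(E)$ (through the $S_e$) together with a representation $\rho$ of $G$ (through the $u_g$) sharing the vertex projections $P_u=\pi_A(u)=\pi(P_u)$, which is precisely the data in the statement.

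The heart of the argument is a single computation turning the left-action relation into self-similarity. Applying $\psi$ to the identity $\phi(u_g)s_e=s_{g\cdot e}\cdot u_{g|_e}$, using left $A$-linearity on the left-hand side and right $A$-linearity on the right-hand side, yields $\rho(g)S_e=S_{g\cdot e}\rho(g|_e)$. Thus the covariance built into any representation of $\Oo_M$ forces $\rho$ to be self-similar; conversely, if $\rho$ is self-similar then the same identity holds on the generators $s_e$, and since these generate $M$ as a right module over $A$, the pair $(\pi_A,\psi)$ is a well-defined covariant Toeplitz representation and therefore integrates to a representation of $\Oo_M=C^*(G,E)$ by universality. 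This gives the stated equivalence, and the final sentence is then immediate: any such $\rho$ equals $\pi_A$ restricted to the canonical copy of $C^*(G)\subseteq C^*(G,E)$, so self-similar representations of $G$ are exactly the restrictions of representations of $C^*(G,E)$.

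I expect the principal obstacle to be bookkeeping rather than anything conceptual. One must verify that the pair $(\pi_A,\psi)$ assembled from $(\rho,\pi)$ is genuinely well defined on all of $M$ and not merely on generators, and in particular that the vertex projections are consistent, $\rho(u)=\pi(P_u)$, so that $\pi_A$ and $\psi$ fit together into a representation of the whole correspondence. Equivalently, in the groupoid picture $C^*(G,E)=C^*(\Gg(G,E))$ of \cite{D}, one must check that the self-similarity relation is exactly the convolution identity among the spanning elements $S_e,u_g$ of $C_c(\Gg(G,E))$. I would therefore perform the verification once on generators and then invoke linearity, the module relations, and the universal property to extend it.
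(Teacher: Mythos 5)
Your argument is correct and follows essentially the same route as the paper: both reduce the proposition to the observation that, among the defining relations of $C^*(G,E)$, the only one not automatically satisfied by a pair $(\rho,\pi)$ is $U_gS_e=S_{g\cdot e}U_{g|_e}$, which is precisely the self-similarity condition. The paper phrases this via the generator--relation presentation of the Cuntz--Pimsner algebra while you phrase it via covariant Toeplitz pairs for the correspondence $\Mm$ over $C^*(G)$, but these are the same argument; your extra care about well-definedness on all of $\Mm$ and about the Katsura ideal only makes explicit what the paper leaves implicit.
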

\begin{proof}

Given a self-similar groupoid action $(G,E)$, recall that the $C^*$-algebra $C^*(G,E)$    is defined as the Cuntz-Pimsner algebra of the $C^*$-correspondence \[\Mm=\Mm(G,E)=\Xx(E)\otimes_{C(E^0)}C^*(G)\] over $C^*(G)$.   Here $\Xx(E)=C(E^1)$ is the $C^*$-correspondence over $C(E^0)$ associated to the graph $E$ and $C(E^0)=C(G^{(0)})\subseteq C^*(G)$. The right action of $C^*(G)$ on $\Mm$ is the usual one and the left action is determined by the  representation
 \[W:G\to \Ll(\Mm), \;\; W_g(i_e\otimes a)=\begin{cases} i_{g\cdot e}\otimes i_{g|_e}a\;\;\text{if}\; d(g)=r(e)\\0\;\;\text{otherwise,}\end{cases}\]
 where $ i_e\in C(E^1)$ and $i_g\in C_c(G)$ are point masses for $e\in E^1, g\in G$ and  $a\in C^*(G)$. The inner product of $\Mm$ is given by
\[\la \xi\otimes a,\eta\otimes b\ra=\la\la\eta,\xi\ra\; a,b\ra=a^*\la \xi,\eta\ra\; b\]
for $\xi, \eta\in C(E^1)$ and $a,b\in C^*(G)$.

The Cuntz-Pimsner algebra $C^*(G,E)$ is generated by  $U_g, P_v$ and $S_e$, the images of $g\in G, v\in E^0=G^{(0)}$ and of $e\in E^1$ which satisfy

(1) $g\mapsto U_g$ is a representation by partial isometries of $G$ with $U_v=P_v$ for $v\in E^0$;

(2)  $S_e$ are partial isometries with $S_e^*S_e=P_{s(e)}$ and $\ds \sum_{r(e)=v}S_eS_e^*=P_v$;

(3)  $U_gS_e=\begin{cases}S_{g\cdot e}U_{g|_e}\;\mbox{if}\; d(g)=r(e)\\0,\;\mbox{otherwise;}\end{cases}$

(4) $ U_gP_v=\begin{cases}P_{g\cdot v}U_g\;\mbox{if}\; d(g)=v\\0,\;\mbox{otherwise.}\;\end{cases}$

A representation $\pi:C^*(E)\to \Ll(\Hh)$ and a representation $\rho$ of $G$ on $\Hh$ will extend to a representation of the Cuntz-Pimsner algebra $C^*(G,E)=C^*(\Gg(G,E))$ if and only if $\rho(g)\pi(S_e)=\pi(S_{g\cdot e})\rho(g|_e)$.

\end{proof}

\begin{rmk}

 Let $(G,E)$ be a self-similar action with $|uE^1|=p$. If $\rho:G\to \Ll(\Hh)$ is a self-similar representation of  $G$ on a Hilbert bundle, then with respect to the decomposition $\psi:\Hh\to \Hh^p$, each $\rho(g)$ has a $p\times p$ matrix $\rho(g)=(A_{yx})$ for $x\in d(g)E^1$ and $y\in t(g)E^1$, where
\[A_{yx}=\begin{cases}\rho(g|_x)&\text{if}\; g\cdot x=y\\0&\text{otherwise.}\end{cases}\]
We have a homomorphism  $\phi:C_c(G)\to \MM_p(C_c(G))$ of the  algebra $C_c(G)$ called matrix recursion, which is the linear extension of 
$\phi(g)=(A_{yx})$ for $x\in d(g)E^1$ and $ y\in t(g)E^1$, where
\[A_{yx}=\begin{cases}g|_x&\text{if}\; g\cdot x=y\\0&\text{otherwise.}\end{cases}\]

\end{rmk}

\begin{example}
For the self-similar action $(G,E)$ of the groupoid $G=\la a,b,c\ra$ in Example \ref{forest} we have \[\sigma_a:\{e_1, e_3\}\to \{e_2,e_6\},\; \sigma_b:\{e_2,e_6\}\to \{e_4, e_5\}, \; \sigma_c:\{e_4,e_5\}\to \{e_2,e_6\},\]
\[\varphi_a=(u,b),\; \varphi_b=(a,c),\; \varphi_c=(a^{-1},b)\] and $\phi:C_c(G)\to M_2(C_c(G))$ is given by
\[\phi(a)=\left[\begin{array}{cc}u&0\\0&b\end{array}\right],\; \phi(b)=\left[\begin{array}{cc}0&a\\c&0\end{array}\right],\; \phi(c)=\left[\begin{array}{cc}a^{-1}&0\\0&b\end{array}\right].\]
\end{example}

\bigskip


\begin{thebibliography}{0000}

\bigskip





\bibitem{D} V. Deaconu, {\em On  groupoids and $C^*$-algebras from  self-similar actions},  New York J. of Math. 27 (2021), 923--942. 

\bibitem{De} V. Deaconu, {\em Groupoid actions on  $C^*$-correspondences}, New York J. of Math. 24 (2018), 1020--1038.



 
 \bibitem{EP} R. Exel, E. Pardo, {\em Self-Similar graphs: a unified treatment of Katsura and Nekrashevych algebras}, Adv. Math. 306(2017), 1046--1129.
 
    \bibitem{LRRW} M. Laca, I. Raeburn, J. Ramagge, M. F. Whittaker, {\em Equilibrium states on operator algebras associated to self-similar actions of groupoids on graphs},  Adv. Math. 331 (2018), 268--325.
    
         
 \bibitem{G} R. I. Grigorchuk {\em Some problems of the dynamics of group actions on rooted trees}. (Russian) Tr. Mat. Inst. Steklova 273 (2011), Sovremennye Problemy Matematiki, 7--191 translation in Proc. Steklov Inst. Math. 273 (2011), no. 1, 64--175
 
 \bibitem{GN} R. I. Grigorchuk and V.  Nekrashevych, {\em Self-similar groups, operator algebras and Schur complement}, J. of Modern Dyn. vol. 1, no. 3 (2007), 323--370.
 






\bibitem{MRW1} P.S. Muhly, J. Renault and D. Williams, {\em Equivalence and isomorphism for groupoid C*-algebras}, J. Operator Theory 17(1987) 3--22.  

\bibitem{N04} V.  Nekrashevych, {\em Cuntz-Pimsner algebras of group actions}, J. Operator Theory, 52 (2004),  223--249.
 
 \bibitem{N} V. Nekrashevych, {\em Self-similar groups}, Math. Surveys Monogr. 117, AMS Providence 2005.

\bibitem{N09} V. Nekrashevych, {\em $C^*$-algebras and self-similar groups}, J. Reine Angew. Math. 630 (2009) 59--123.




   
\bibitem{W} Williams, Dana P. {\em A tool kit for groupoid $C^*$-algebras}. Mathematical Surveys and Monographs, 241. American Mathematical Society, Providence, RI, 2019. xv+398 pp.
\end{thebibliography}
\end{document}